  \let\origsection=\section 
  \def\section{\@ifstar{\origsection*}{\mysection}}
  \def\mysection{\@startsection{section}{1}\z@{.7\linespacing\@plus\linespacing}{.5\linespacing}{\normalfont\scshape\centering\S}}
\let\phi=\varphi
\def\red{\text{\rm red}}
\def\blue{\text{\rm blue}}
\def\green{\text{\rm green}}
  \renewcommand{\PrintDOI}[1]{\doi{#1}}
  \let\polishlcross=\l
  \def\l{\ifmmode\ell\else\polishlcross\fi}
  \renewcommand{\setminus}{\smallsetminus}
  \def\moverlay{\mathpalette\mov@rlay}
  \def\mov@rlay#1#2{\leavevmode\vtop{%
     \baselineskip\z@skip \lineskiplimit-\maxdimen
     \ialign{\hfil$\m@th#1##$\hfil\cr#2\crcr}}}
  \newcommand{\charfusion}[3][\mathord]{
      #1{\ifx#1\mathop\vphantom{#2}\fi
          \mathpalette\mov@rlay{#2\cr#3}
        }
      \ifx#1\mathop\expandafter\displaylimits\fi}
  \newtheorem{theorem}{Theorem}
  \newtheorem{lemma}[theorem]{Lemma}
  \newtheorem{corollary}[theorem]{Corollary}
  \newtheorem*{remark*}{Remark}
  \newtheorem{fact}[theorem]{Fact}
\begin{document}

\title[Counting Gallai $k$-colorings of complete graphs]{The number of Gallai $k$-colorings of complete graphs}

\author[J.~de~O.~Bastos]{Josefran de Oliveira Bastos}
\address{Engenharia da Computação, Universidade Federal do Ceará, Ceará, Brazil}
\email{josefran@ufc.br}

\author[F.~S.~Benevides]{Fabrício Siqueira Benevides}
\address{Departamento de Matemática, Universidade Federal do Ceará, Ceará, Brazil}
\email{fabricio@mat.ufc.br}

\author[J.~Han]{Jie Han}
\address{Department of Mathematics, University of Rhode Island, Kingston, RI, USA}
\email{jie\_han@uri.edu}

\thanks{The first and second author were supported by CAPES Probral (Proc. 88887.143992/2017-00) and CNPq (Proc. 437841/2018-9); the second author by CNPQ (Proc. 310512/2015-8, Proc. 401519/2016-3) and FUNCAP. The third author was supported by FAPESP (Proc. 2014/18641-5). This study was financed in part by the Coordenação de Aperfeiçoamento de Pessoal de Nível Superior - Brasil (CAPES) - Finance Code 001.}

\begin{abstract}
An edge coloring of the $n$-vertex complete graph, $K_n$, is a \emph{Gallai coloring} if it does not contain any rainbow triangle, that is, a triangle whose edges are colored with three distinct colors. 
We prove that for $n$ large and every $k$ with $k\le 2^{n/4300}$, the number of Gallai colorings of $K_n$ that use at most $k$ given colors is $(\binom{k}{2}+o_n(1))\,2^{\binom{n}{2}}$. 
Our result is asymptotically best possible and implies that, for those $k$, almost all Gallai $k$-colorings use only two colors. However, this is not true for $k \ge \Omega (2^{2n})$.
\end{abstract}

\keywords{Gallai colorings, rainbow triangles, complete graphs, counting}

\maketitle

\section{Introduction}

An edge coloring of the complete graph on $n$ vertices, $K_n$, is a \emph{Gallai coloring} if it contains no \mbox{\emph{rainbow}~$K_3$},  that is, no copy of $K_3$ in which all edges have different colors. Here, we always use $k$-coloring to refer to an edge coloring that uses (not necessarily all) colors from a set of $k$ colors.

The term \emph{Gallai coloring} was used by Gyárfás and Simonyi in~\cites{GySi2004edge}, but those colorings have also been studied under the name \emph{Gallai partitions} by Körner, Simonyi and Tuza in~\cites{korner1992perfect}. The nomenclature is due to a close relation to a result in the influential Gallai's original paper \cites{gallai1967transitiv} -- translated to English with added comments in \cites{gallai1967-translation}. The above mentioned papers are mostly concerned with structural and Ramsey-type results about Gallai colorings.

Following a recent trend of working on problems about counting certain colorings and analyzing the typical structure of them, for integers $k\geq 1$ and $n \geq 2$, we are interested in the problem of counting the number, $c(n, k)$, of Gallai $k$-colorings of $K_n$ (that use a given set of $k$ colors). In this counting, we consider the vertices of $K_n$ as labeled. 

The related problem of counting colorings of graphs such that every color class does not contain a particular graph $F$ was studied originally by Erd\H{o}s and Rothschild (see, e.g.,~\cites{erdos1974some}) has motivated a number of results (for example~\cites{alon2004number,hoppen2015rainbow, hoppen2017graphs, pikhurko2017erdHos}) and it was generalized to colorings that avoid other coloring pattern in \cites{benevides2017edge}. In turn, Gallai colorings have surprising relations to Information Theory \cites{korner2000graph} and a generalization  of the (weak) Perfect Graph Theorem~\cites{cameron1986note}, and has also been generalized to non-complete graphs \cites{gyarfas2010gallai} and hypergraphs \cites{chua2013gallai}. Recently, there has also been a trend in Ramsey-type problems that involve Gallai coloring (see \cites{wu2018all, fox2015erdHos, fujita2011gallai, chen2018gallai, zhang2018gallai}).

In this context, the problem of counting Gallai colorings is very a natural one and has been ``almost'' overlooked. A trivial lower bound for $c(n,k)$, when $k\ge 2$, is given by the colorings that use at most two colors: $c(n,k) \ge \binom{k}{2} (2^{\binom{n}{2}}-2) + k$.

In \cites{bastos2018counting}, its was proved that $c(n,3) \le 7(n+1)\,2^{\binom{n}{2}}$, for every $n\ge 2$. Previous bounds for $c(n,3)$ were obtained by Falgas-Ravry, O’Connell, Strömberg and Uzzell, of order $2^{(1+o_n(1))\binom{n}{2}}$ using the container and entropy method, and around the same time Benevides, Hoppen and Sampaio \cites{benevides2017edge} gave a bound of order $(n-1)!\,2^{\binom{n}{2}}$.

The main purpose of this paper is to prove that the lower bound for $c(n,k)$ is asymptotically sharp for $k \le 2^{n/4300}$.
We point out that, in \cite{GySi2004edge}, it was proved that any Gallai coloring of $K_n$ uses at most $n-1$ colors. In spite of this, the definition of $c(n,k)$ makes sense even for $k \ge n$, as in a Gallai $k$-coloring we do not have to use all $k$ colors. 

\begin{theorem}~\label{thm:main_result}
  For $n$ large enough and every $k$ with $2<k \le 2^{n/4300}$, we have
  \[
   c(n, k) = \left(\binom{k}{2}+o_n(1)\right)\,2^{\binom{n}{2}}. 
  \]
\end{theorem}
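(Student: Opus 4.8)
The lower bound is immediate: the Gallai $k$-colorings using at most two colors are exactly the $\binom k2(2^{\binom n2}-2)+k$ colorings of $K_n$ with at most two colors, so $c(n,k)\ge\binom k2 2^{\binom n2}-2\binom k2=(\binom k2-o_n(1))2^{\binom n2}$ because $k\le 2^{n/4300}$ forces $\binom k2\le 2^{n/2150}=o_n(1)\,2^{\binom n2}$. For the upper bound write $c(n,k)=(\text{number of Gallai }k\text{-colorings using at most two colors})+N(n,k)$, where $N(n,k)$ counts those using at least three colors; the first summand is at most $\binom k2 2^{\binom n2}$, so it suffices to show $N(n,k)=o_n(1)\,2^{\binom n2}$, uniformly over $3\le k\le 2^{n/4300}$.

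The plan is to induct on $n$, proving the stronger statement that $c(n',k)\le\binom k2 2^{\binom{n'}2}+\mathrm{Err}(n',k)$ for \emph{all} $n'\ge2$ and all $k\ge 2$, where $\mathrm{Err}(n',k):=\sum_{i\ge1}(2k)^i\,(n')_i\,\binom k2\,2^{\binom{n'-i}2}$ and $(n')_i$ denotes the falling factorial. Two features matter: it holds for every $k$, which sidesteps the fact that the hypothesis $k\le 2^{n'/4300}$ is not inherited exactly by smaller instances; and it is tailored so that $\mathrm{Err}(n',k)=o(2^{\binom{n'}2})$ exactly when $k\le 2^{n'/4300}$ (the $i=1$ term is $\asymp n'k\binom k2 2^{-(n'-1)}\cdot 2^{\binom{n'}2}$ and $n'k\binom k2 2^{-(n'-1)}\to0$ since $k\ll 2^{n'}$; later terms are smaller). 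The inductive step rests on the Gyárfás--Simonyi structure theorem: a Gallai coloring of $K_{n'}$ ($n'\ge 2$) is a substitution of Gallai colorings into the vertices of an at-most-$2$-colored complete graph on $2\le m\le n'$ vertices, and one may take $m\le n'-1$ as soon as three colors are used. Iterating produces the modular (Gallai) decomposition tree of $\chi$: a rooted tree with leaf set $[n']$ whose internal nodes $u$ each carry an at-most-$2$-coloring $\phi_u$ of the complete graph on their $d_u$ children. One encodes a coloring using $\ge 3$ colors by its canonical such tree — invoking uniqueness of the modular decomposition here is the technically delicate point, since summing over \emph{all} Gallai partitions would overcount each coloring by as much as a Bell number and be far too lossy. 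The key bookkeeping identity is $\sum_u\binom{d_u}2=\binom{n'}2-\mathrm{def}$, where $\mathrm{def}\ge0$ is the number of edges of $K_{n'}$ whose endpoints lie in two distinct children of their lowest common ancestor; hence the number of colorings realizing a fixed tree is at most $\prod_u(\binom k2+k)\,2^{\binom{d_u}2}\le k^{2|I|}\,2^{\binom{n'}2-\mathrm{def}}$, where $I$ is the set of internal nodes.

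The dominant contribution is the ``single cherry'' shape — a vertex $v$ with a monochromatic star over an otherwise at-most-$2$-colored $K_{n'-1}$ — which has $\mathrm{def}=n'-2$ and contributes $\le\binom{n'}2\cdot k\cdot\big(\binom k2 2^{\binom{n'-1}2}+k\big)\le n'k\,c(n'-1,k)$; this term alone telescopes to $c(n,k)\le\binom k2 2^{\binom n2}+\mathrm{Err}(n,k)$. The remaining work is to show that all other tree shapes contribute, in total, only $o\big(n'k\,c(n'-1,k)\big)$. The mechanism is that the deficit grows with the complexity of the decomposition — $\mathrm{def}\ge n'-2$ always, and, by a more careful count, $\mathrm{def}$ is at least of order $n'$ times the number of non-trivial modules — so it dominates the extra color freedom $k^{2|I|}=2^{O(|I|\log k)}$ with room to spare, since $\log k\le n'/4300$. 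One organizes the sum over shapes by $|I|$ and by the \emph{unordered} vector of module sizes (unorderedness being what prevents an $n'!$ blow-up from permuting singleton modules), bounds a module of size $t$ by the inductive estimate when $t$ is large and by $c(t,k)\le(t-1)^{\binom t2}k^{\,t-1}$ (a Gallai coloring of $K_t$ uses at most $t-1$ colors) when $t$ is small, and checks that in every regime the combined weight of the shape count, the quotient coloring, and the inner colorings is beaten by $2^{-\mathrm{def}}$. The constant $4300$ is a non-optimized artifact of balancing these inequalities.

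The step I expect to be the real obstacle is precisely this last, genuinely multi-parameter estimate: quantifying that a complicated modular decomposition ``pays'' for its extra color choices through its deficit, uniformly over $k\le 2^{n/4300}$, and arranging the summation so that the small- and medium-module cases do not leak. A secondary obstacle is the clean handling of uniqueness of the modular decomposition, which is what turns the encoding $\chi\mapsto(\text{its canonical tree})$ into an injection rather than merely a surjection from tree-data onto colorings.
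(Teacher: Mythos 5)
Your plan is genuinely different from the paper's (which never builds a decomposition tree: it bounds, for each coloring, the number of ways to extend it vertex by vertex --- Lemmas~\ref{lemma:2t+1}--\ref{lemma:nobig2colored} --- and then splits $\Phi_{n\to k}$ into four classes according to a maximum set $A(\phi)$ inducing a coloring in $F(\cdot,k)$ and a maximum $2$-colored set $A'(\phi)\subseteq A(\phi)$), but as it stands it has a genuine gap: the decisive quantitative step is not carried out, and you say so yourself. Everything hinges on showing that, summed over all tree shapes other than the single cherry, the product of (number of shapes with a given internal-node count and unordered module-size vector) $\times$ (number of assignments of the $n'$ labeled vertices to the modules) $\times$ (quotient and inner colorings, bounded by $k^{2|I|}$ and the inductive estimate) is dominated by $2^{-\mathrm{def}}$, uniformly for $k\le 2^{n/4300}$, and moreover is $o(n'k\,c(n'-1,k))$ so that the claimed telescoping survives. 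No inequality of this kind is proved or even precisely stated; ``$\mathrm{def}$ is at least of order $n'$ times the number of non-trivial modules'' is asserted, not shown, and the vertex-assignment multinomial (which for, say, $n'/2$ modules of size $2$ is of order $2^{n'\log n'}$) is exactly the kind of factor that must be beaten explicitly. Without this estimate the argument proves nothing beyond the trivial bounds.

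Two further points would need repair even to set the induction up. First, your bookkeeping quantity is misdefined: \emph{every} pair of leaves lies in two distinct children of its lowest common ancestor, so the ``deficit'' as you describe it would be $\binom{n'}{2}$; what you want is $\mathrm{def}=\binom{n'}{2}-\sum_u\binom{d_u}{2}$, i.e.\ the overcount of leaf-pairs over child-pairs, and the lower bounds on $\mathrm{def}$ must be proved for that quantity. Second, the encoding $\phi\mapsto$ (canonical tree) requires a uniqueness/canonicity statement for Gallai decompositions of \emph{colorings} (not graphs); Theorem~2.2 of~\cite{GySi2004edge} gives existence of a nontrivial Gallai partition, not a canonical finest or coarsest one, and you flag this as ``delicate'' without resolving it. (For an upper bound you could instead sum over all tree-data, accepting overcounting, but then you lose precisely the Bell-number factor you say you cannot afford, so the issue is not cosmetic.) By contrast, the paper's extension-counting route avoids both problems entirely, at the cost of the slightly ad hoc classification into $M_1,\dots,M_4$; if you want to salvage your approach, the multi-parameter estimate and the canonicity lemma are the two things you must actually write down.
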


This shows that almost all Gallai $k$-colorings use only two colors, for $k \le 2^{n/4300}$. On the other hand, it is easy to see that, for $k\ge 2^{2n}$, this is not true. In fact, fix an edge of $K_n$, say $uv$. Choose a sequence of $3$ distinct colors, say green, red and blue. Color $uv$ by green, all other edges incident to $u$ or $v$ by red, and color the edges induced by $V(K_n) -\{u, v\}$ with either red or blue so that at least one of them is blue. This gives, for $k, n \ge 5$, at least
\[
  k(k-1)(k-2)\left(2^{\binom{n-2}{2}}-1\right) \ge \frac{k(k-1)(k-2)}{2}2^{\binom{n-2}{2}} = \frac{8(k-2)}{2^{2n}}\binom{k}{2}2^{\binom n2}
\]
colorings that use three different colors. Thus, Theorem~\ref{thm:main_result} does not hold for $k =\Omega(2^{2n})$.
Moreover, for $k\gg 2^{2n}$, \emph{most} colorings do \emph{not} use only two colors.

Our proof is self contained with the exception of two (elementary) results from \cites{GySi2004edge} (whose proofs are also short and we encourage the reader to look at them). Furthermore, our proofs are elementary, based on how to classify the colorings while counting the number of ways to extend them. It came recently to our attention that, independently, Balogh and Li~\cites{balogh2018typical} proved a similar upper bound, but for $k$ constant and $n$ large using the container and stability method.


In this paper, we also show (see Corollary~\ref{cor:cnk}) that 
\[
  c(n, k) \le (k-1)^n\,2^{\binom{n}{2}},
\] for every $n\ge 2$ and $k\ge 2$.

%


The following two results from~\cites{GySi2004edge} shall be useful to us. 

\begin{theorem}[see Theorem 2.1 or 2.2 of \cites{GySi2004edge}]\label{thm:GyarfasTree}
  Every Gallai coloring of a complete graph $K_n$ contains a monochromatic spanning tree.
\end{theorem}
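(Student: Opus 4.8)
The plan is to deduce the statement from the classical structure theorem for rainbow-triangle-free colourings, due to Gallai \cites{gallai1967transitiv} and reproved in \cites{GySi2004edge}: for $n\ge 2$, every Gallai colouring of $K_n$ admits a partition $V(K_n)=V_1\dcup\cdots\dcup V_t$ with $t\ge 2$ such that every pair $V_i,V_j$ is joined monochromatically and the \emph{reduced} colouring on $K_t$ — where the edge $ij$ receives the colour appearing between $V_i$ and $V_j$ — uses at most two colours. (The case $n\le 1$ is trivial.) So the first step is simply to quote this theorem, after which the remaining work is elementary.

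Second, I would invoke the folklore fact that in \emph{any} $2$-edge-colouring of a complete graph one colour class is connected and spanning: if the red graph is disconnected, then every edge joining two of its components is blue, so the blue graph contains a complete multipartite graph with at least two parts on the full vertex set, which is connected and spanning. Applying this to the reduced $K_t$, some colour — say red — induces a connected spanning subgraph of $K_t$, and since $t\ge 2$ every part then has a red neighbour in the reduced graph.

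Third, and this is the only point that needs a little care, comes the lifting step. Let $R$ be the subgraph of $K_n$ consisting of \emph{all} edges lying between parts $V_i,V_j$ with $ij$ red in the reduced graph. By the definition of the partition $R$ is monochromatic (red), and it is obviously spanning; I claim $R$ is connected, so that any spanning tree of $R$ is the desired monochromatic spanning tree of $K_n$. Indeed, let $u\in V_i$ and $v\in V_j$. If $i\ne j$, choose a red path $i=i_0,i_1,\dots,i_m=j$ in the reduced graph and pick $w_\ell\in V_{i_\ell}$ for $0<\ell<m$; since the $i_\ell$ are distinct the vertices $u,w_1,\dots,w_{m-1},v$ are distinct, and each consecutive pair lies in two parts joined by a red reduced edge, so $u\,w_1\cdots w_{m-1}\,v$ is a red path in $K_n$. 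If $i=j$, pick $i'$ with $ii'$ red in the reduced graph and a vertex $w\in V_{i'}$; then $u\,w\,v$ is a red path. Hence $R$ is connected.

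The real content, then, is packed into Gallai's structure theorem; if one instead wanted a self-contained proof, one could induct on $n$ by deleting a vertex $v$, taking a monochromatic (say red) spanning tree $T$ of $K_n-v$, finishing at once if some edge at $v$ is red, and otherwise analysing the colours of the edges between the classes $\{u:uv\text{ has colour }c\}$ via the no-rainbow-triangle condition on the triangles through $v$ — but that is exactly the bookkeeping which Gallai's theorem performs once and for all, so citing it is the efficient route.
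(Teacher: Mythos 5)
Your argument is correct: the Gallai partition with $t\ge 2$ parts and a $2$-coloured reduced graph, the folklore fact that one colour class of a $2$-coloured complete graph is connected and spanning, and the path-lifting step (including the $i=j$ case) together do yield a monochromatic connected spanning subgraph, hence a monochromatic spanning tree. Note, however, that the paper offers no proof of this statement at all—it is quoted from Gyárfás and Simonyi, and your route via Gallai's structure theorem is essentially the standard argument from that cited source, so your write-up matches the intended derivation rather than providing a genuinely different one.
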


\begin{theorem}[Theorem 3.1 of \cites{GySi2004edge}]\label{thm:max} Every Gallai coloring of $K_n$ has a color with largest degree at least $2n/5$.
\end{theorem}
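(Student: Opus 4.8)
The plan is to route everything through the Gallai partition. For any Gallai coloring of $K_n$ with $n\ge2$ there is a partition $V_1,\dots,V_m$ of $V(K_n)$ into $m\ge2$ nonempty parts such that all edges between any two parts receive a single color and the reduced coloring on $[m]$ uses only two colors, say red and blue. Write $n_i=|V_i|$, and for $i\in[m]$ set $W_r(i)=\sum_{j:\,ij\text{ red}}n_j$ and $W_b(i)=\sum_{j:\,ij\text{ blue}}n_j$, so that $W_r(i)+W_b(i)=n-n_i$. The key (and trivial) observation is that every vertex $v\in V_i$ has all of its $n_j$ edges into each red-neighbor part $V_j$ colored red, so $\deg_{\mathrm{red}}(v)\ge W_r(i)$ in $K_n$, and likewise $\deg_{\mathrm{blue}}(v)\ge W_b(i)$; crucially this uses nothing about how the parts are colored internally. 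Hence it suffices to find an index $i$ and a color $c$ with $W_c(i)\ge 2n/5$.

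I would then split on $m$. If $m\ge5$, pick $\ell$ with $n_\ell$ smallest; then $n_\ell\le n/m\le n/5$, so $W_r(\ell)+W_b(\ell)=n-n_\ell\ge 4n/5$ and one of the two terms is at least $2n/5$, whence any $v\in V_\ell$ has that color-degree at least $2n/5$. If $2\le m\le4$, observe that in the two-colored reduced $K_m$ at least one color class, say red, is connected (the complement of a disconnected graph is connected), and that a connected graph on at most four vertices always contains a \emph{dominating edge} $ij$, that is, one with $N_r(i)\cup N_r(j)=[m]$; this is a short check over the few connected graphs on two, three and four vertices. For such an edge, $W_r(i)+W_r(j)\ge\sum_{v\in N_r(i)\cup N_r(j)}n_v=\sum_{v\in[m]}n_v=n$, so $\max\{W_r(i),W_r(j)\}\ge n/2\ge 2n/5$, and again we are done. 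The two cases together give the theorem.

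The main obstacle is just the bookkeeping in the $m\le4$ case: one has to verify that every connected graph on at most four vertices (equivalently, since one color class is connected, every two-coloring of $K_4$) has a monochromatic dominating edge. This is where I expect the only genuine casework; everything else is immediate once the Gallai partition is available, and in particular no induction on the parts is needed. It is also worth noting why the constant is sharp for this argument: when $m=5$ with all parts of size $n/5$ and reduced coloring equal to two complementary $5$-cycles $C_5\cup\overline{C_5}$, every $W_c(i)$ equals exactly $2n/5$, and that reduced coloring has no monochromatic dominating edge — so the $m\le4$ reasoning genuinely cannot be extended to $m=5$, and $2/5$ is the correct constant.
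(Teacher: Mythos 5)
Your argument is correct, but note that the paper itself contains no proof of this statement: it is imported verbatim from Gy\'arf\'as--Simonyi (Theorem~3.1 of \cite{GySi2004edge}), so there is no internal proof to compare against, and your write-up is in the same spirit as the cited source, which also works through Gallai partitions. The key steps all check out: the Gallai partition theorem gives $m\ge 2$ parts with a $2$-colored reduced $K_m$ (this is the one external ingredient you rely on, itself a nontrivial classical result, so strictly speaking your proof is ``modulo Gallai's theorem''); the observation $\deg_{\red}(v)\ge W_r(i)$ for $v\in V_i$ is immediate from the definition of the partition; for $m\ge 5$ the smallest part gives $W_r(\ell)+W_b(\ell)\ge 4n/5$; for $2\le m\le 4$ one color class of the reduced coloring is a connected spanning subgraph (a graph or its complement is connected), and a quick check of the connected graphs on at most four vertices confirms that each has an edge $ij$ with $N(i)\cup N(j)\cup\{i,j\}$ equal to the whole vertex set, whence $W_r(i)+W_r(j)\ge n$ and some vertex of $V_i\cup V_j$ has red degree at least $n/2$. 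The only caveats are cosmetic: the statement should be read with $n\ge 2$ (for $n=1$ it is vacuously degenerate and the Gallai partition does not apply), and your closing remark about the blown-up $2$-colored $C_5$ explains why \emph{your method} cannot beat $2/5$ rather than exhibiting the true extremal construction (which requires iterated substitution, since the internal colorings of the parts also contribute to degrees); neither affects the validity of the proof.
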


In this paper all logarithms are on base $2$. We omit the floor and ceiling functions as long as they do not affect the calculations.
For any natural number $n$, we denote $\{1, \ldots, n\}$ by~$[n]$. Moreover, whenever we talk about three colors we refer to them as $\red$, $\green$ and $\blue$.

\section{Proof of the main result}
\label{sec:counting-extensions}

Let $\Phi_{n\to k}$ be the set of all Gallai colorings of $K_n$ that use colors in $[k]$. So, each element of $\Phi_{n \to k}$ is a function $\phi : E(K_n) \to [k]$ and $c(n, k) = |\Phi_{n\to k}|$. For any Gallai coloring $\phi$ of $E(K_n)$, we denote by $w(\phi, k)$ the number of ways to extend $\phi$ to a Gallai coloring of $E(K_{n+1})$ where the new edges receive colors from $[k]$ (regardless of how many colors actually appear in $\phi$). 

We start with the following trivial fact that calculates the number of extensions of a monochromatic coloring. This fact and Lemma~\ref{lemma:2t+1} are generalizations of lemmas from \cites{bastos2018counting}.

\begin{fact}\label{claim:monochromatic_extensions}
  Let $k$ and $n$ be a positive integers and $\phi \in \Phi_{n\to k}$ be a monochromatic coloring of the edges of $K_n$.
  Then,
  \[
    w(\phi, k) = (k-1)2^{n} - (k-2).  
  \]
\end{fact}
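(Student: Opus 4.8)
We want to count the colorings $\psi$ of $E(K_{n+1})$ that restrict to a fixed monochromatic coloring $\phi$ of $K_n$ — say every edge of $K_n$ is colored $1$ — and that are Gallai. The only freedom is in coloring the $n$ new edges joining the new vertex $v$ to the old vertices $v_1,\dots,v_n$. Let $c_i\in[k]$ be the color assigned to $vv_i$. The constraint is: for every pair $i\ne j$, the triangle $\{v,v_i,v_j\}$ must not be rainbow; since the edge $v_iv_j$ has color $1$, this triangle is rainbow exactly when $c_i,c_j$ and $1$ are three distinct colors, i.e.\ when $c_i\ne c_j$ and $c_i\ne 1$ and $c_j\ne 1$. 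So a vector $(c_1,\dots,c_n)\in[k]^n$ is admissible iff it does \emph{not} contain two coordinates that are distinct and both different from $1$.

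**Counting the admissible vectors.**
The plan is to split according to how many colors other than $1$ appear among $c_1,\dots,c_n$. If \emph{zero} such colors appear, every $c_i$ equals $1$: that is $1$ vector. If \emph{exactly one} such color, say color $a\ne1$, appears, then each $c_i\in\{1,a\}$ with color $a$ used at least once; there are $k-1$ choices for $a$ and $2^n-1$ nonempty subsets of positions to receive $a$, giving $(k-1)(2^n-1)$ vectors. Two or more colors $\ne 1$ cannot appear, since two coordinates carrying distinct colors both $\ne1$ is exactly the forbidden pattern. Summing,
\[
 w(\phi,k)=1+(k-1)(2^n-1)=(k-1)2^n-(k-2),
\]
which is the claimed formula. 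One should double‑check the small edge cases $k=1$ (only one color, forced monochromatic extension, giving $2^n-1\cdot\ldots$; the formula yields $2^n-(-1)$? no: $k=1$ gives $0\cdot 2^n+1=1$, correct) and $k=2$ (formula gives $2^n$, matching: every $c_i\in\{1,2\}$ is fine since no rainbow triangle is possible with only two colors).

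**Main obstacle.**
There is essentially no hard step here: the only thing to be careful about is correctly characterizing when the triangle through the new vertex is rainbow — in particular remembering that a repeated non‑$1$ color across two coordinates is allowed (the triangle $\{v,v_i,v_j\}$ then uses colors $\{a,a,1\}$, only two colors) — and then not double counting when partitioning by the set of non‑$1$ colors used. Writing the count as "all coordinates lie in $\{1,a\}$ for a single $a$, plus the all‑$1$ vector" handles this cleanly. So I would present the one‑paragraph argument above verbatim as the proof.
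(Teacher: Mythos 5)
Your proposal is correct and follows essentially the same route as the paper: both arguments rest on the observation that an extension can use at most one color besides the base color, and then count the at-most-two-colored extensions, handling the all-base-color extension so it is not overcounted (you partition into disjoint cases, the paper subtracts the $k-1$-fold overcount), yielding $(k-1)2^n-(k-2)$ either way.
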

\begin{proof}
Without loss of generality, assume that all edges of $K_n$ are colored blue. Let $\{u\} = V(K_{n+1}) \setminus V(K_n)$. 
Notice that in any extension of this coloring we can only use one color different from blue.
So, for each choice of the other color (among the other $k-1$ available), we have $2^n$ extensions. 
As the extension in which all edges are blue is counted $k-1$ times, the total number of extensions is $(k-1)2^n - (k-1) + 1$, as claimed.
\end{proof}

\begin{lemma}\label{lemma:2t+1}
  Let $k$ and $n$ be a positive integers and $\phi\in \Phi_{n\to k}$ be any Gallai $k$-coloring of $K_n$. If $\phi'\in \Phi_{n+1\to k}$ is an extension of $\phi$ to $E(K_{n+1})$, then
  \[
   w(\phi',k) \le 2w(\phi,k)+(k-2). 
  \]
\end{lemma}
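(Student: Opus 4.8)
The plan is to run a fiber-counting argument on the map that deletes the vertex added in passing from $K_n$ to $K_{n+1}$. Write $V(K_{n+1}) = V(K_n)\cup\{u\}$. Any Gallai extension $\psi'$ of $\phi'$ to $K_{n+2}$ is obtained by adding a new vertex $v$ and coloring the $n+1$ edges $\{vx : x\in V(K_n)\}\cup\{uv\}$ so that the result is Gallai. Deleting $u$ from such a $\psi'$ leaves a Gallai coloring of the copy of $K_{n+1}$ on $V(K_n)\cup\{v\}$ whose restriction to $V(K_n)$ is $\phi$; hence it is one of the colorings counted by $w(\phi,k)$. This yields a map $\psi'\mapsto\psi$ from the set counted by $w(\phi',k)$ to the set counted by $w(\phi,k)$, and the whole statement reduces to bounding its fibers.

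Next I would analyze a single fiber. Fix an extension $\psi$ of $\phi$; the $\psi'$ mapping to $\psi$ are exactly those obtained by choosing a color $c\in[k]$ for the one remaining edge $uv$. Since $\phi'$ is Gallai on $V(K_n)\cup\{u\}$ and $\psi$ is Gallai on $V(K_n)\cup\{v\}$, the only triangles of $K_{n+2}$ whose colors are not already fixed and Gallai are the triangles $\{u,v,x\}$ with $x\in V(K_n)$. Thus $c$ is admissible if and only if for every $x\in V(K_n)$ one has $\phi'(ux)=\psi(vx)$ or $c\in\{\phi'(ux),\psi(vx)\}$. Consequently, if there is even one $x_0\in V(K_n)$ with $\phi'(ux_0)\neq\psi(vx_0)$, every admissible $c$ lies in the two-element set $\{\phi'(ux_0),\psi(vx_0)\}$, so the fiber over $\psi$ has at most $2$ elements.

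The only extension of $\phi$ not handled above is $\psi_0$, defined by $\psi_0(vx)=\phi'(ux)$ for all $x\in V(K_n)$; here I would check that $\psi_0$ is itself Gallai (each triangle $\{v,x,y\}$ repeats the color pattern of $\{u,x,y\}$ under $\phi'$, hence is never rainbow), so $\psi_0$ genuinely occurs among the colorings counted by $w(\phi,k)$, and its fiber has at most $k$ elements. Summing fiber sizes over all extensions of $\phi$ gives
\[
  w(\phi',k)\;\le\;2\bigl(w(\phi,k)-1\bigr)+k\;=\;2w(\phi,k)+(k-2),
\]
as desired. The argument is short and uses no earlier result; the only points needing care are bookkeeping which vertex is added at each stage, and verifying that the exceptional coloring $\psi_0$ is a legitimate Gallai extension of $\phi$ so that it may be split off from the $2w(\phi,k)$ term.
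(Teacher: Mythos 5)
Your proposal is correct and follows essentially the same argument as the paper: the paper likewise enumerates the $w(\phi,k)$ colorings of the edges from the new vertex to $V(K_n)$ and bounds the number of admissible colors for the remaining edge to $u$ by $2$ in all cases except the single coloring that copies $u$'s colors, which contributes at most $k$. Your fiber-counting phrasing and the check that the exceptional coloring is Gallai are just slightly more explicit bookkeeping of the same decomposition.
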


\begin{proof}
Let $V=V(K_n)$.
  Let $\phi'$ be a extension of $\phi$ to $E(K_{n+1})$ with at most $k$ colors and $u \notin V$ be the new vertex (added to obtain $K_{n+1}$).
  To count the number of Gallai extensions of $\phi'$ to $E(K_{n+2})$, we will add a new vertex $x$ and all edges from $x$ to $V\cup\{u\}$. We first color the edges from $x$ to~$V$.
  If we let $t = w(\phi, k)$, there are $t$ colorings, say $\phi_1, \ldots, \phi_t$, of the edges from $x$ to~$V$. For each $i\in [t]$, we let $m_i$ be the number of ways we can color the edge $ux$ given that we have colored the edges from $x$ to $V$ as in $\phi_i$.
  Clearly, $m_i \in \{0, 1,\ldots, k\}$ and $w(\phi',k) = \sum_{i=1}^{t}m_i$.

  Fix any $i \in [t]$. Recall that the edges from $u$ to $V$ are already colored (in $\phi'$). If there is any vertex $v \in V$ such that $\phi'(xv) \neq \phi_i(uv)$, then the only colors that can be used for $ux$ are $\phi'(xv)$ and $\phi_i(uv)$, then $m_i \le 2$. Therefore, the only way to have $m_i \ge 3$ is when the coloring $\phi_i$ is such that $\phi_i(xy) = \phi(uy)$ for every $y$ in $V$, and for such coloring we have $m_i = k$. This implies that $\sum_{i=1}^{t}m_i\le 2(t-1)+k = 2t + (k-2)$.
\end{proof}

We remark that when $\phi$ is a monochromatic coloring and $\phi'$ is its monochromatic extension, by Fact~\ref{claim:monochromatic_extensions}, we have $w(\phi, k) = (k-1)2^{n}-(k-2)$ and $w(\phi', k) = (k-1)2^{n+1}-(k-2)$. Therefore, $w(\phi',k) = 2w(\phi,k) + (k-2)$, which implies that Lemma~\ref{lemma:2t+1} is best possible.

\subsection{Extensions of graphs that use exactly one color}

We start by proving an easy lemma that says that all Gallai $k$-coloring of $K_n$ have at most as many extensions to a Gallai $k$-coloring of $K_{n+1}$ as the monochromatic colorings. This also implies our weak upper bound on $c(n,k)$. As we shall see later, most of the Gallai $k$-colorings have significantly less extensions than the monochromatic ones, and this fact will imply a better bound for $c(n,k)$. However, the weak upper bound will also be useful in our proof.

\begin{lemma}\label{lemma:uniquemonochromatic_extensions}
  For positive integers $k$ and $n$, and every Gallai coloring $\phi \in \Phi_{n\to k}$, we have 
  \[
    w(\phi, k) \le (k-1)2^{n}-(k-2).
  \]
\end{lemma}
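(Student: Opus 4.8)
The plan is to prove Lemma~\ref{lemma:uniquemonochromatic_extensions} by induction on $n$, using Lemma~\ref{lemma:2t+1} as the inductive engine. The base case $n=1$ (or $n=2$) is immediate: with a single edge (or trivially, no edge), one directly checks $w(\phi,k) \le (k-1)2^n - (k-2)$, and the monochromatic case is covered exactly by Fact~\ref{claim:monochromatic_extensions}.

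**For the inductive step**, suppose the bound holds for $n$, and let $\phi' \in \Phi_{n+1\to k}$ be arbitrary. We want $w(\phi', k) \le (k-1)2^{n+1} - (k-2)$. The natural move is to delete a vertex from $K_{n+1}$ to obtain a Gallai coloring $\phi$ of $K_n$ (any restriction of a Gallai coloring is Gallai), apply the inductive hypothesis to get $w(\phi, k) \le (k-1)2^n - (k-2)$, and then invoke Lemma~\ref{lemma:2t+1}, which gives $w(\phi', k) \le 2 w(\phi, k) + (k-2)$. Chaining these:
\[
  w(\phi', k) \le 2\big((k-1)2^n - (k-2)\big) + (k-2) = (k-1)2^{n+1} - 2(k-2) + (k-2) = (k-1)2^{n+1} - (k-2),
\]
which is exactly the desired bound. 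So the whole argument is a clean two-line induction once the ingredients are in place.

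**The one subtlety to check** — and the only place the argument could stumble — is the hypothesis of Lemma~\ref{lemma:2t+1}: it requires $\phi'$ to be an \emph{extension} of some Gallai coloring $\phi$ of $K_n$. This is automatic: if $\phi'$ is a Gallai $k$-coloring of $K_{n+1}$, pick any vertex $v \in V(K_{n+1})$ and let $\phi = \phi'|_{E(K_{n+1} - v)}$; then $\phi \in \Phi_{n\to k}$ and $\phi'$ is by definition an extension of $\phi$. Since this $\phi$ has at most $k$ colors and is Gallai, the inductive hypothesis applies to it. So there is no real obstacle here — the "hardest" part is just making sure the quantifiers line up and that the arithmetic in the induction step collapses to the claimed expression, which it does since the $-(k-2)$ terms are designed precisely so that $2\cdot(\text{bound}) + (k-2)$ reproduces the bound for $n+1$. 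The remark following Lemma~\ref{lemma:2t+1} already observes that equality is attained by the monochromatic colorings, confirming the bound is tight and the induction is propagating the extremal case correctly.
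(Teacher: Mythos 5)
Your proposal is correct and is essentially the paper's own proof: induction on $n$, restricting an arbitrary Gallai $k$-coloring of $K_{n+1}$ to $K_{n+1}-v$ and chaining the inductive hypothesis with Lemma~\ref{lemma:2t+1}, with the same arithmetic collapse. No issues.
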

\begin{proof} The proof is by induction on $n$. The result clearly holds for $n=1$ and $n=2$.
   Now, assume that $n\ge 2$, and $w(\phi, k) \le (k-1)2^{n}-(k-2)$ for every $\phi \in \Phi_{n\to k}$. Let $\phi'$ be any Gallai $k$-coloring of $K_{n+1}$. Take any vertex $v \in V(K_{n+1})$ and let $\phi$ be the restriction of the coloring $\phi'$ to the edges of $K_{n+1} - v$. Lemma~\ref{lemma:2t+1} implies that
  \[
    w(\phi', k) \le 2w(\phi, k)+(k-2)     
	\le 2((k-1)2^{n-1}-(k-2))+(k-2) = (k-1)2^{n}-(k-2).\qedhere
  \]
\end{proof}

\begin{remark*}
It also follows from an analogous induction argument that the only colorings that achieve this maximum are the monochromatic ones, but we will not need this fact.
\end{remark*}

\begin{corollary}\label{cor:cnk} For integers $n, k\ge 2$ we have
  $c(n, k) \le (k-1)^n\,2^{\binom{n}{2}}$. 
\end{corollary}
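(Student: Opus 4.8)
The plan is to deduce the bound from Lemma~\ref{lemma:uniquemonochromatic_extensions} by a short induction on $n$. The key observation is that every Gallai $k$-coloring of $K_{n+1}$ is obtained, from a \emph{unique} Gallai $k$-coloring of $K_n$ (its restriction to $K_{n+1}-v$ for a fixed vertex $v$, which is still rainbow-triangle-free), by choosing one of its Gallai extensions. Hence
\[
  c(n+1,k) \;=\; \sum_{\phi\in\Phi_{n\to k}} w(\phi,k).
\]
Bounding each summand by Lemma~\ref{lemma:uniquemonochromatic_extensions}, namely $w(\phi,k)\le (k-1)2^{n}-(k-2)\le (k-1)2^{n}$, yields the recursion $c(n+1,k)\le (k-1)2^{n}\,c(n,k)$.

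For the base case one can take $n=1$, where $c(1,k)=1$ (the empty coloring) equals $(k-1)^{0}2^{\binom 12}=1$; alternatively, starting at $n=2$ works as well, since $c(2,k)=k\le 2(k-1)=(k-1)\,2^{\binom 22}$ for $k\ge 2$. Iterating the recursion from the base case gives
\[
  c(n,k)\;\le\;\prod_{j=1}^{n-1}(k-1)2^{j}\;=\;(k-1)^{n-1}\,2^{1+2+\cdots+(n-1)}\;=\;(k-1)^{n-1}\,2^{\binom n2},
\]
which is in fact slightly stronger than the stated $(k-1)^{n}2^{\binom n2}$ and in particular implies it.

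There is no real obstacle here: essentially all the content sits in Lemma~\ref{lemma:uniquemonochromatic_extensions}, and the only points requiring (routine) care are the telescoping of the product $\prod_{j}(k-1)2^{j}$ to recover the exponent $\binom n2$ and the verification of the base case.
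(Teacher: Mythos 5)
Your proof is correct and follows essentially the same route as the paper: induction on $n$ via the identity $c(n+1,k)=\sum_{\phi\in\Phi_{n\to k}}w(\phi,k)$ together with the bound $w(\phi,k)\le (k-1)2^{n}$ from Lemma~\ref{lemma:uniquemonochromatic_extensions}. The only (harmless) difference is that you start the induction at $n=1$ instead of $n=2$, which even yields the marginally sharper bound $(k-1)^{n-1}2^{\binom n2}$.
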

\begin{proof}
  For each fixed $k\ge 2$, we use induction on $n\ge2$. For the base case, when $n=2$, we are simply saying that $k \le (k-1)^2\cdot 2^1$, what is true for $k \ge 2$. Now, assuming the result holds for some $n\ge 2$, by Lemma~\ref{lemma:uniquemonochromatic_extensions}, we have
  \[
    c(n+1, k) = \sum_{\phi \in \Phi_{n\to k}}w(\phi, k) \leq  (k-1)2^{n} \cdot c(n, k) \leq (k-1)^{n+1}\,2^{\binom{n+1}{2}}.\qedhere
  \]

\end{proof}

\subsection{Colorings that are rare of have few extensions}

Due to the results of the previous section, one may hope that the $k$-colorings that are ``close to being monochromatic'' are those that have the most extensions. Our proof of Theorem~\ref{thm:main_result} does \emph{not} require us to prove precisely this. But we do show that whenever we have a \emph{$2$-coloring} where both colors are used many times, then we have few extensions. 

\begin{lemma}\label{lemma:nonextremal} 
For every $m, n, k\ge 2$, every 2-colorings $\phi$ of $K_n$ such that both colors are used more than $3mn$ times satisfies
   \[
   w(\phi, k) \leq 2^n + kn 2^{n  - 0.4 m}.
  \]
  Furthermore, at most $kn 2^{n  - 0.4 m}$ such extensions use a color that is not used in $\phi$.
\end{lemma}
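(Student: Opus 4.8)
The plan is first to describe which extensions use a colour not appearing in $\phi$, then to count them. Write $\red,\blue$ for the two colours of $\phi$, let $u$ be the new vertex, and put $V=V(K_n)$. If an extension $\phi'$ uses a colour $c\notin\{\red,\blue\}$, then two edges at $u$ carrying distinct such colours, together with the $\{\red,\blue\}$-coloured edge joining their other endpoints, would form a rainbow triangle; so $c$ is the \emph{only} new colour at $u$. Setting $C:=\{v\in V:\phi'(uv)=c\}\ne\emptyset$, for $v\in C$ and $w\in V\setminus C$ the triangle $uvw$ forces $\phi'(uw)=\phi(vw)$. Hence $C$ is \emph{homogeneous to the outside}, that is, every $w\notin C$ is joined to $C$ monochromatically, and $\phi'$ is then determined by $(c,C)$; conversely, every nonempty homogeneous-to-the-outside $C$ and every $c\notin\{\red,\blue\}$ give, by this recipe, a Gallai extension. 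Writing $N$ for the number of nonempty homogeneous-to-the-outside subsets of $V$,
\[
 w(\phi,k)=2^{n}+(k-2)\,N,
\]
the $2^{n}$ counting the extensions using only $\red$ and $\blue$. Thus it suffices to prove $N\le n\,2^{\,n-0.4m}$; note the hypothesis forces $6mn<\binom n2$, in particular $n>12m$.

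The second step finds many vertices with both colour-degrees large. Put $\theta:=\lceil1.04\,m\rceil$ and $A:=\{v:\deg_\red(v)\ge\theta,\ \deg_\blue(v)\ge\theta\}$, $P:=\{v:\deg_\red(v)<\theta\}$, $Q:=\{v:\deg_\blue(v)<\theta\}$. As $\deg_\red(v)+\deg_\blue(v)=n-1$ and $\theta<(n-1)/2$, the sets $P,Q$ are disjoint, so $A,P,Q$ partition $V$. Each of the $|P|\,|Q|$ edges between $P$ and $Q$ is red — hence chargeable to its $P$-endpoint, which has fewer than $\theta$ red edges — or blue, chargeable to its $Q$-endpoint; so $|P|\,|Q|<\theta(|P|+|Q|)\le\theta n$ and $\min(|P|,|Q|)<\sqrt{\theta n}$, by symmetry say $|P|<\sqrt{\theta n}$. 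Since at most $\theta|Q|\le\theta n$ blue edges meet $Q$, more than $3mn-\theta n$ blue edges lie inside $V\setminus Q$, whence $\binom{|V\setminus Q|}{2}>n(3m-\theta)$ and so $|A|=|V\setminus Q|-|P|>\sqrt{2n(3m-\theta)}-\sqrt{\theta n}$. Because $\theta<2m$ and $n>12m$, the right side exceeds $s:=\lceil\theta/\log_2 6\rceil$, which is about $0.4m$.

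The third step bounds $N$. Fix $T\subseteq A$ with $|T|=s$; then each $a\in T$ has at least $\theta-s$ red-neighbours and at least $\theta-s$ blue-neighbours outside $T$. For a nonempty homogeneous-to-the-outside $C$, put $W:=C\cap(V\setminus T)$: every $a\in T$ having both a red- and a blue-neighbour in $W\subseteq C$ must lie in $C$, so $C$ is determined by $W$ and one of at most $2^{\,s-f(W)}$ choices of $C\cap T$, where $f(W)$ counts those forced vertices. Hence, with $W$ uniform over subsets of $V\setminus T$,
\[
 N\ \le\ \sum_{W\subseteq V\setminus T}2^{\,s-f(W)}\ =\ 2^{n}\,\mathbb{E}_{W}\!\bigl[\,2^{-f(W)}\,\bigr].
\]
Let $Y_a=1$ if $W$ avoids every red-neighbour of $a$ outside $T$, or every blue-neighbour of $a$ outside $T$, and $Y_a=0$ otherwise, so $2^{-f(W)}=2^{-s}\prod_{a\in T}(1+Y_a)$. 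Expanding,
\[
 \mathbb{E}\bigl[2^{-f(W)}\bigr]=2^{-s}\sum_{S\subseteq T}\Pr\bigl[\,Y_a=1\text{ for all }a\in S\,\bigr].
\]
For $S\ne\emptyset$, each way of having $Y_a=1$ for all $a\in S$ prescribes, for every $a\in S$, one monochromatic neighbourhood of $a$ outside $T$ (of size $\ge\theta-s$) that $W$ avoids; a union bound over the $2^{|S|}$ prescriptions gives $\Pr[Y_a=1\ \forall a\in S]\le2^{|S|}2^{-(\theta-s)}$, so with the term $S=\emptyset$ (equal to $1$) we get $\mathbb{E}[2^{-f(W)}]<2^{-s}+3^{s}2^{-\theta}$. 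Since $s=\lceil\theta/\log_2 6\rceil$ we have $2^{-s}\le2^{-\theta/\log_2 6}$ and $3^{s}2^{-\theta}<3\cdot2^{-\theta/\log_2 6}$; as $\theta\ge1.04m>0.4m\log_2 6$, this yields $N<4\cdot2^{\,n-0.4m}\le n\,2^{\,n-0.4m}$ (recall $n>12m$). The same computation gives $(k-2)N\le(k-2)\,n\,2^{\,n-0.4m}<k\,n\,2^{\,n-0.4m}$, the asserted bound on the extensions using a colour not in $\phi$.

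The crux is the estimate of $\mathbb{E}_W[2^{-f(W)}]$. The events $\{Y_a=1\}$ are positively correlated (each is a decreasing event in $W$), so the naive factorisation $\mathbb{E}[\prod_a(1+Y_a)]\le\prod_a\mathbb{E}[1+Y_a]$ is not available — it in fact fails. Instead one expands the product and bounds each joint probability directly, by a union bound over the $2^{|S|}$ patterns of which monochromatic neighbourhood is avoided by $W$; it is exactly the resulting factor $3^{s}=\sum_{S\subseteq T}2^{|S|}$ that forces the threshold $\theta$ to be taken a little above $m$, so that $3^{s}2^{-\theta}$ — and hence $N/2^{n}$ — stays below $2^{-0.4m}$.
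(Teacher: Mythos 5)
Your proof is correct, but it takes a genuinely different route from the paper's. The paper argues locally: since both colours appear more than $3mn$ times, one can greedily find $m+1$ vertex-disjoint paths $r_iu_ib_i$ with $u_ir_i$ red and $u_ib_i$ blue; for an extension giving some edge $uv$ a new colour, every other edge at the new vertex has at most $2$ admissible colours, while each such path disjoint from $v$ admits at most $6$ (rather than $8$) colourings of its three edges, yielding $kn\,6^{m}2^{n-3m-1}\le kn\,2^{n-0.4m}$. You instead describe the new-colour extensions exactly: each corresponds to a pair $(c,C)$ with $C$ a nonempty set seen monochromatically by every outside vertex, so $w(\phi,k)=2^{n}+(k-2)N$, and you bound $N$ by fixing $s\approx 0.4m$ vertices whose red- and blue-degrees are both at least $\theta\approx 1.04m$ and running a union-bound/moment computation over $W=C\setminus T$; the factor $3^{s}=\sum_{S\subseteq T}2^{|S|}$ is exactly what ties $s$ to $\theta/\log_2 6$, and the same constant $\log_2 6$ governs both proofs (via $6^{m}$ per cherry in the paper, via $3^{s}$ in yours). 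Your version buys an exact structural characterization of the extensions using a new colour, at the price of a more delicate count; the paper's per-cherry factor-of-$6$ argument is shorter and entirely local. One spot to tighten: your justification that $|A|$ exceeds $s$ cites only $\theta<2m$ and $n>12m$, which by itself gives merely $\sqrt{2n(3m-\theta)}-\sqrt{\theta n}>0$; using the actual value $\theta\le 1.04m+1$ (so $\theta\le 1.54m$ for $m\ge 2$) one gets $|A|>\bigl(\sqrt{2.92}-\sqrt{1.54}\bigr)\sqrt{mn}>1.6m>s$, so the claim is true but deserves this explicit computation. The remaining steps --- the bijection $(c,C)\mapsto\phi'$, the forcing argument defining $f(W)$, and the estimate $\mathbb{E}\bigl[2^{-f(W)}\bigr]<2^{-s}+3^{s}2^{-\theta}\le 4\cdot 2^{-\theta/\log_2 6}\le n\,2^{-0.4m}$ --- all check out.
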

\begin{proof}
  For $k=2$ the result is trivial, since there are at most $2^n$ extensions. 
Let $k \ge 3$.
  Let $\phi $ be a coloring of $E(K_n)$ by $\red$ and $\blue$ as in the statement.
  As before, let $u \in V(K_{n+1})\setminus V(K_n)$, so that we want to count the number of ways to color the edges from $u$ to $V(K_n)$. 
 First, note that there are $2^n$ ways to extend $\phi$ to a Gallai coloring of $K_{n+1}$ using only red and blue.
Secondly, note that using two new colors (that is, different from red and blue) will immediately create a rainbow triangle.
Thus, it remains to show that there are at most $kn 2^{n  - 0.4 m}$ extensions that use exactly one new color, say green.
  
We claim that there exist $\{(r_1, u_1, b_1), (r_2, u_2, b_2), \ldots, (r_{m+1}, u_{m+1}, b_{m+1})\}$, a set of disjoint triples of vertices of $K_n$ such that $\phi(u_ir_i) = \red$ and $\phi(u_ib_i) = \blue$ for all $i\in [m+1]$. 
Indeed, assume that the maximum set of such disjoint triples has size $m$.
This implies that all edges inside $V(K_n) - \bigcup_{i = 1}^{m} \{r_i, u_i, b_i\}$ have the same color, say blue. So, the number of red edges is at most $3mn$, a contradiction.
  
  Fix a vertex $v \in V(K_n)$. Suppose that we want to build an extension $\phi'$ of $\phi$ such that $\phi'(uv) = \green$. Let us count in how many ways we can complete the extension $\phi'$. Note that, for any vertex $w\in V(K_n)\setminus \{v\}$, (as $\phi(vw)$ is not green) $\phi'(uw)$ must be either green or $\phi(vw)$. So, assuming $\phi'(uv) = \green$, there are at most $2$ choices for every other edge.  However, we claim that for every $i \in [m+1]$ such that $v\notin \{u_i, r_i, b_i\}$, there are at most $6$ ways to color the set of edges $\{uu_i, ur_i, ub_i\}$. 

To see this, fix $i$ such that $v\notin \{u_i, r_i, b_i\}$ and assume that $\phi(vu_i) = \red$ (the case $\phi(vu_i) = \blue$ is analogous). 
Recall that $ub_i$ must receive either $\green$ or $\phi(vb_i)$. 
In the case $ub_i$ is $\green$, then $uu_i$ must also be $\green$ (looking at the triangles $uvu_i$ and $ub_iu_i$) and we have (at most) $2$ options for the color of $ur_i$. 
In the case $ub_i$ has color $\phi(vb_i)$, we trivially have at most $4$ options for the colors of $uu_i$ and $ur_i$. This gives a total of at most $6$ ways to color the set $\{uu_i, ur_i, ub_i\}$.

  Finally, we count how many extensions $\phi$ has with one new color (with some room to spare). We have $n$ options to choose a vertex $v$, have $k-2$ options for the color of $uv$, have $6^{m}$ ways to color the edges from $u$ to $m$ of those $\{u_i, r_i, b_i\}$ such that $v\notin \{u_i, r_i, b_i\}$ and $2$ ways to color each of the $n-3m-1$ remaining edges. This gives less than
  \begin{equation*}
        kn6^{m} 2^{n - 3m-1}  =  kn 2^{n  - (3 - \log 6)m-1} \le kn 2^{n  - 0.4 m}
  \end{equation*}
  such extensions.
\end{proof}

Let $F(n,k)$ be the set of colorings of $K_n$ with colors from $[k]$, such that one color forms a spanning subgraph of $K_n$ and the others span pairwise vertex-disjoint (possibly empty) subgraphs of $K_n$. By Theorem~\ref{thm:GyarfasTree}, a Gallai $k$-coloring is in $F(n,k)$ if, and only if, it does not have a vertex that has three edges of different colors incident to it. Let $F'(n,k)\subseteq F(n,k)$ be the set of such colorings with the extra assumption that there is no set of at  least~$0.9n$ vertices that induces a 2-colored clique. Let $f'(n,k) = |F'(n,k)|$.
Next, we give an upper bound on $f'(n,k)$.

\begin{lemma}\label{lem:9} For every $n, k \ge 2$, we have 
\[
  f'(n,k) \le 2^{\binom n2 - 0.05n^2 + (n+1)\log k}.
\]
\end{lemma}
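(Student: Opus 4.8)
The goal is to bound $f'(n,k)$, the number of colorings in $F(n,k)$ with no $2$-colored clique on $\ge 0.9n$ vertices. Recall that a coloring in $F(n,k)$ has one distinguished color, say blue, spanning $K_n$, while every other color induces a subgraph on a set of vertices $V_1, V_2, \dots$ that are pairwise disjoint; equivalently, we choose a partition of a subset $W\subseteq V(K_n)$ into blocks, assign each block a distinct non-blue color, and inside each block we choose freely among $\{$blue, block color$\}$, while all edges touching two different blocks or one block and $V\setminus W$ are blue. So such a coloring is specified by: (i) the partition structure of the non-blue part, (ii) the choice of which colors to use, (iii) the internal $2$-colorings of each block. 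The number of internal choices for a block of size $s_i$ is $2^{\binom{s_i}{2}}$, so if the non-blue blocks have sizes $s_1,\dots,s_r$ and $s=\sum s_i$, the number of colorings with that exact vertex-partition is at most $k^r \cdot \prod_i 2^{\binom{s_i}{2}}$, and the number of ways to choose the partition itself (which vertices go in which labeled-by-size block) is crudely at most $(r+1)^n \le (n+1)^n$ or better.

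The key point is the following convexity/counting estimate: since $\binom{s_1}{2}+\dots+\binom{s_r}{2} \le \binom{s}{2}$ with equality only when one block has all $s$ vertices, and since the ``no large $2$-colored clique'' hypothesis forbids any single block having size $\ge 0.9n - O(1)$ (a block of size $t$ together with the outside blue edges gives a $2$-colored clique on those $t$ vertices, and in fact on $t$ vertices plus any vertices outside $W$, all joined in blue — so actually $W$ itself must have size $< 0.9n$, because $V\setminus W$ together with any one block is $2$-colored, forcing... let me be careful: a block of size $t$ plus all of $V\setminus W$ forms a clique using only blue and that block's color, so $t + (n - s) < 0.9n$ for every block). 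Hence $\max_i s_i \le 0.9n - (n-s)$, and more importantly $s < 0.9n$. Under the constraint $\sum s_i = s < 0.9n$ and each $s_i \le 0.9n-(n-s)$, the sum $\sum \binom{s_i}{2}$ is maximized by making one block as large as possible: $\sum_i \binom{s_i}{2} \le \binom{0.9n-(n-s)}{2} + \binom{s - (0.9n-(n-s))}{2} + \cdots$, but the cleanest bound is simply $\sum_i \binom{s_i}{2} \le \binom{s_{\max}}{2} + \binom{s - s_{\max}}{2}$ iterated, which with $s_{\max}, s-s_{\max}$ both bounded away from $n$ gives $\sum_i\binom{s_i}{2} \le \binom n2 - c n^2$ for an absolute constant $c$. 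I would compute $c$ explicitly: with everything at most $0.9n$, one checks $\sum\binom{s_i}{2} \le \binom{\lceil s/2\rceil}{2}+\binom{\lfloor s/2 \rfloor}{2}$-type bounds are not tight enough, so instead use that the maximum of $\sum\binom{s_i}{2}$ over $\sum s_i=s$, $s_i\le L$ is attained at $\lfloor s/L\rfloor$ full blocks of size $L$ plus a remainder; with $s\le 0.9n$ and $L\le 0.9n$ this is at most roughly $\binom{0.9n}{2}\le 0.405 n^2 \le \binom n2 - 0.09n^2$, comfortably giving the claimed $-0.05n^2$.

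Putting the pieces together:
\[
f'(n,k) \le \sum_{\text{partitions}} k^{r}\prod_i 2^{\binom{s_i}{2}} \le (n+1)^{n}\, k^{n}\, 2^{\binom n2 - 0.09 n^2},
\]
and since $(n+1)^n k^n = 2^{n\log(n+1) + n\log k}$ with $n\log(n+1) = o(n^2) \le 0.04n^2$ for $n$ large, we absorb this into the exponent to obtain $f'(n,k)\le 2^{\binom n2 - 0.05n^2 + (n+1)\log k}$ (the $(n+1)\log k$ absorbing the $k^n$ term with a spare $\log k$ to cover the choice of which color is distinguished).

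The main obstacle is getting the constant right: I need the combinatorial lemma that $\sum_i \binom{s_i}{2}$ is genuinely bounded away from $\binom n2$ by a positive fraction of $n^2$, which requires using \emph{both} that $s=\sum s_i<0.9n$ (so a chunk of vertices is entirely blue outside $W$) \emph{and} that no individual block is too large — and then verifying the convexity argument carefully enough that the loss beats the $n\log(n+1)+n\log k$ overhead from the partition and color choices. One must also double-check the claim that membership in $F'(n,k)$ really does force $s<0.9n$: I will argue that for $\phi\in F(n,k)$, the vertex set $(V\setminus W)\cup V_j$ (the non-blue part outside all blocks, together with a single block $V_j$) induces a clique colored only with blue and the $j$-th color, so if $s \ge 0.9n$ then picking $V_j$ to be the largest block still leaves $(V\setminus W)\cup V_j$ of size $\ge n-s+s/r$; if $r=1$ this is all of $V$ and contradicts $F'$ directly, and if $r\ge 2$ one instead notes the entirely-blue clique on $V\setminus W$ has size $n-s$, which combined with structural bookkeeping... — the safest route is: if $r\le 1$ the coloring is a $2$-coloring of $K_n$, excluded; if $r\ge 2$, then $(V\setminus W)\cup V_j$ for the largest $j$ has $\ge (n-s) + \lceil s/2\rceil = n - \lfloor s/2\rfloor$ vertices, so $F'$ forces $n-\lfloor s/2\rfloor < 0.9n$, i.e. $s > 0.2n$ — that's the wrong direction, so in fact the useful consequence is rather on $\max_i s_i$: $(V\setminus W)\cup V_j$ is $2$-colored of size $(n-s)+s_j$, hence $s_j \le 0.9n - (n-s) = s - 0.1n$ for every $j$, so every block misses at least $0.1n$ vertices of $W$, forcing $r\ge 2$ and $s_{\max}\le s-0.1n$. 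Feeding $s_{\max}\le s-0.1n \le 0.9n - 0.1n$ (if $s\le n$) into $\sum\binom{s_i}{2}\le \binom{s_{\max}}{2}+\binom{s-s_{\max}}{2}$ and optimizing over $s$ is what yields the quadratic savings; I expect $s_{\max}\le 0.9n$ together with $s-s_{\max}\le s_{\max}$ to be the clean inputs, giving $\sum\binom{s_i}{2}\le \binom n2 - 0.09n^2$ after a short computation.
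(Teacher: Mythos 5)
Your overall strategy is the same as the paper's: decompose a coloring in $F'(n,k)$ by its block structure, bound the internal choices by $2^{\sum_i\binom{s_i}{2}}$, and use the absence of a $2$-colored clique on $0.9n$ vertices to force a quadratic saving in the exponent. But the execution has concrete gaps. First, the claim that $s=|W|<0.9n$ is false: take two blocks of size $n/2$ with generic internal colorings; every $2$-colored clique then has size about $n/2$, yet $s=n$. What the hypothesis actually gives is only $s_j+(n-s)<0.9n$ for each block, i.e.\ $s_{\max}\le s-0.1n\le 0.9n$, as you eventually note. Second, your final computation feeds $s_{\max}\le 0.9n$ into the merge bound $\sum_i\binom{s_i}{2}\le\binom{s_{\max}}{2}+\binom{s-s_{\max}}{2}$ together with the ``clean input'' $s-s_{\max}\le s_{\max}$; nothing forces $s-s_{\max}\le s_{\max}$ (many small blocks violate it), and without it the merge bound yields no quadratic saving, since for $s_{\max}$ small and $s$ close to $n$ the right-hand side is close to $\binom n2$. (In that regime the true sum is of course small, so the argument can be repaired, but you need either a case split on $s_{\max}$ or the capped-block convexity statement you mention only in passing: the maximum of $\sum_i\binom{s_i}{2}$ subject to $\sum_i s_i=s\le n$ and $s_i\le 0.9n$ is $\binom{0.9n}{2}+\binom{s-0.9n}{2}\le\binom n2-0.09n^2$ --- and this must be run with $s\le n$, not with the false $s\le 0.9n$.)

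There is also a mismatch with the statement: the lemma claims the bound for every $n,k\ge 2$, while your $(n+1)^n$ factor for choosing the partition forces an ``$n$ large'' assumption in order to absorb $n\log(n+1)\le 0.04n^2$, so as written you only obtain the inequality asymptotically. The paper avoids both problems with a cleaner encoding: choose the spanning color ($k$ ways) and then assign every vertex one of the remaining $k-1$ colors ($(k-1)^n<k^n$ ways), which encodes the blocks and their colors simultaneously and makes the classes partition all of $V$. Then $\sum_i\binom{x_i}{2}=\binom n2-\tfrac12\sum_i x_i(n-x_i)$ holds exactly, and $x_i\le 0.9n$ (each class induces a $2$-colored clique) gives $\sum_i x_i(n-x_i)\ge 0.1n^2$, hence the $-0.05n^2$ saving directly, with total overhead only $k^{n+1}=2^{(n+1)\log k}$ and no convexity analysis or largeness assumption. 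I recommend rewriting your count along these lines.
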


\begin{proof}
There are $k$ choices for the color of the spanning tree. Assume, without loss of generality that we have chosen color $k$ for it. There are $(k-1)^n < k^n$ ways to partition the vertices of $K_n$ into (labeled and possibly empty) classes. For $i\in [k-1]$, let $x_i$ be the number of vertices in class~$i$. We have that $\sum_{i\in[k-1]} x_i= n$ and there are $2^{\sum_{i\in [k-1]} \binom{x_i}{2}}$ ways\footnote{For the sake of this notation, we are considering $\binom 02 = \binom12 = 0$.}  to color the edges of $K_n$ so that those inside class $i$ receive color $i$ or color $k$, for every $i\in[k-1]$, and edges between classes receive color $k$. Note that
\begin{equation*}
\sum_{i\in [k-1]} \binom{x_i}{2} = \binom n2 - \sum_{1\le i < j\le k-1} x_i x_j = \binom n2 - \frac12 \sum_{i\in [k-1]} x_i (n-x_i).
\end{equation*}
By the definition of $F'(n,k)$ we have $x_i \le 0.9n$ for every $i\in[k-1]$. Therefore,
\[
  \sum_{i\in [k-1]} x_i (n-x_i) \ge \sum_{i\in [k-1]} x_i (0.1n) = 0.1n^2.
\]
Thus, 
\[
\sum_{i\in [k-1]} \binom{x_i}{2} \le \binom n2 - 0.05n^2.
\]
In total, we get
\[
f'(n,k) \le k\cdot k^n 2^{\binom n2 - 0.05n^2} = 2^{\binom n2 - 0.05n^2 + (n+1)\log k}.  \qedhere
\]
\end{proof}

\medskip
The next lemma treats another case in which we can guarantee that there are few extensions: Gallai $k$-coloring of $K_n$ that contains no large subgraph that induces a coloring in~$F$. For $k$ in the range of Theorem~\ref{thm:main_result}, the number of such extensions is significantly less than the one for monochromatic colorings.

  \begin{lemma}\label{lemma:nobig2colored}
    Let $k, t, n\ge 2$ be integers such that $n\ge 75 t$, and $\phi \in \Phi_{n \to k}$ be a Gallai $k$-coloring that does not contain any set of ${n}/{3}$ vertices which induce a coloring in $F(n/3, k)$. Then
    \[w(\phi, k) \leq t k^2 2^{n-t}.\]
  \end{lemma}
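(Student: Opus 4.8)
The plan is to locate inside $\phi$ a small "rainbow gadget" at $t$ pairwise disjoint places, and then to count Gallai extensions of $\phi$ one new edge at a time, using each gadget to cut the number of admissible local colourings in half. First I unpack the hypothesis through the characterisation stated right after the definition of $F(n,k)$: since no set of $n/3$ vertices induces a colouring in $F(n/3,k)$, every $S\subseteq V(K_n)$ with $|S|=n/3$ has a vertex incident, inside $S$, to three edges with three pairwise distinct colours. Greedily removing such configurations, I extract pairwise disjoint quadruples $Q_1,\dots,Q_t$ with $Q_i=\{x_i,a_i,b_i,d_i\}$ and $\phi(x_ia_i),\phi(x_ib_i),\phi(x_id_i)$ pairwise distinct: at step $i$ the set $V(K_n)\setminus(Q_1\cup\cdots\cup Q_{i-1})$ still has more than $n/3$ vertices because $4(i-1)<4t\le 4n/75<2n/3$, so it contains such a quadruple. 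This is the only use of $n\ge 75t$, and it is made with much room to spare.

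Next, add the new vertex $u$ and fix any reference vertex $v$ outside $Q_1\cup\cdots\cup Q_t$ (possible since $n-4t>0$). I count the Gallai extensions $\phi'$ of $\phi$ to $K_{n+1}$ by conditioning on the colour $c:=\phi'(uv)$, so that $w(\phi,k)=\sum_{c=1}^{k}N_c$, where $N_c$ is the number of extensions with $\phi'(uv)=c$. For every vertex $w\neq v$ the triangle $uvw$ forces $\phi'(uw)\in\{c,\phi(vw)\}$, at most two colours; and since triangles among gadget vertices, and between gadget vertices and the rest, only impose further restrictions, an upper bound for $N_c$ is obtained by treating the $\le n-1-4t$ edges outside the gadgets ($\le 2$ colours each) and the four edges inside each gadget independently. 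The heart of the matter is the \emph{local claim}: for each $i$, at most $8$ of the (at most $2^4$) colourings of $ux_i,ua_i,ub_i,ud_i$ allowed by the two-choice restriction extend to a Gallai colouring. Granting it, $N_c\le 8^{\,t}\,2^{\,n-1-4t}=2^{\,n-1-t}$, hence $w(\phi,k)=\sum_{c=1}^{k}N_c\le k\,2^{\,n-1-t}\le t\,k^2\,2^{\,n-t}$, as claimed.

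It remains to prove the local claim, which I expect to be the real work. Fix $i$, relabel the three colours at $x_i$ as $1,2,3$ (so $\phi(x_ia_i)=1$, $\phi(x_ib_i)=2$, $\phi(x_id_i)=3$), and note that Gallai's condition inside $Q_i$ gives $\phi(a_ib_i)\in\{1,2\}$, $\phi(a_id_i)\in\{1,3\}$, $\phi(b_id_i)\in\{2,3\}$. If $\phi(vy)=c$ for some $y\in Q_i$ then $uy$ is forced and only $2^3$ colourings survive, so assume $\phi(vy)\neq c$ for all $y\in Q_i$. Now run a finite case analysis on whether $c$ and $\phi(vx_i)$ lie in $\{1,2,3\}$, tracking the constraints from the six triangles $ux_ia_i$, $ux_ib_i$, $ux_id_i$, $ua_ib_i$, $ua_id_i$, $ub_id_i$ (and, in the few borderline configurations, from triangles through $v$): since $\{c,\phi(vx_i)\}$ has at most two elements, some colour among $1,2,3$ differs from both, and propagating it through the corresponding triangle at $u$ already knocks the count below $2^4$; the other triangles above are then used to bring it down to exactly $8$. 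The obstacle is making this accounting give $8$ \emph{uniformly} — in some configurations the crude count lands at $9$ or $12$ and one must invoke a further triangle, or, if a uniform bound resists, choose $v$ or the $Q_i$ adaptively (for instance so that $v$ sees each $Q_i$ in at least two colours, the configuration that halves the count most transparently). Note that the slack between $k\,2^{n-1-t}$ and $t\,k^2\,2^{n-t}$ is only polynomial in $t$ and $k$, not exponential, so the factor-$2$ gain per gadget is essential and cannot be relaxed.
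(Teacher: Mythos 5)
Your extraction of $t$ disjoint rainbow copies of $K_{1,3}$ from the hypothesis is fine, but the counting scheme built on them breaks at its foundation. After conditioning on $c=\phi'(uv)$ you assert that every edge $uw$ has at most two admissible colours, namely $\{c,\phi(vw)\}$; this is only true when $\phi(vw)\neq c$. If $\phi(vw)=c$, the triangle $uvw$ already has two edges of colour $c$ and places \emph{no} restriction on $uw$ (your parenthetical claim that such an edge is ``forced'' is backwards). So even the baseline $N_c\le 2^{n-1}$ is unjustified: nothing in the hypothesis prevents your reference vertex $v$ (indeed every candidate for it) from sending almost all of its edges in one colour, and Fact~\ref{claim:monochromatic_extensions} shows that per-colour extension counts of order $(k-1)2^{n-1}$ really occur in that regime, which is incompatible with $N_c\le 2^{n-1-t}$. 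This is exactly where the paper's argument diverges: it anchors the count at $t$ vertices $v_1,\dots,v_t$ of large monochromatic degree (Theorem~\ref{thm:max}), places the rainbow stars \emph{inside the monochromatic neighbourhoods} $N_{c_i}(v_i)$, and then distinguishes whether some $uv_i$ receives a colour other than $c_i$ (in which case each star supplies a pair $ua_j,ub_j$ forced to be equal and to lie in a two-element set, giving the factor $2^{-t}$) or all $uv_i$ receive colour $c_i$ (in which case those $t$ fixed edges themselves give the saving), finishing with Lemma~\ref{lemma:uniquemonochromatic_extensions} for the remaining edges rather than with a per-edge two-choice bound.

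Independently of this, your ``local claim'' (at most $8$ of the $16$ two-choice colourings per gadget survive) is not only left unproven — you say so yourself — it is false. Take $c=1$, a gadget $Q=\{x,a,b,d\}$ with $\phi(xa)=1$, $\phi(xb)=2$, $\phi(xd)=3$, $\phi(ab)=2$, $\phi(ad)=\phi(bd)=3$, and a reference vertex $v$ with $\phi(vx)=\phi(va)=\phi(vb)=2$, $\phi(vd)=3$; one checks this is a Gallai colouring of these six vertices, and $v$ sees two colours on $Q$, so your suggested adaptive choice of $v$ does not exclude it. The two-choice sets are $ux,ua,ub\in\{1,2\}$ and $ud\in\{1,3\}$, and inspecting every triangle inside $\{u,v,x,a,b,d\}$ the only rainbow obstructions are $(ux,ud)=(2,1)$, $(ua,ud)=(2,1)$, $(ub,ud)=(2,1)$; hence the admissible local colourings number $8$ (all with $ud=3$) plus $1$ (with $ud=1$ and $ux=ua=ub=1$), i.e.\ $9>8$. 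Since, as you note, the factor-$2$ gain per gadget is essential and the slack in the target bound is only polynomial in $t$ and $k$, the argument does not go through in this form; some mechanism like the paper's monochromatic-neighbourhood anchoring is needed to create the exponential saving.
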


  \begin{proof}
    Let $\phi$ be a Gallai $k$-coloring of $K_n$ as in the statement of this lemma.
    We start proving that $\phi$ must contain a certain structure. 
    By Theorem~\ref{thm:max}, we can inductively choose vertices $v_1,\dots, v_{t}$, such that for every $i\in [t]$, there is a color $c_i$ such that $v_i$ is adjacent to at least $2(n-i+1)/5 \ge 2n/5 - t$ vertices in $K_n\setminus\{v_1,\dots, v_{i-1}\}$ through edges of color $c_i$. Let $T=\{v_1,\dots, v_{t}\}$.


    Next, for each $i\in [t]$, we will build $t$ vertex-disjoint rainbow copies of $K_{1,3}$ (where different copies may use different triples of colors)  in $N_{c_i}(v_i)\setminus T$, where $N_{c_i}(v_i)$ stands for the set of vertices that are connected to $v_i$ via edges of color $c_i$.
    Moreover, we do not care whether the copies of $K_{1,3}$ for different $i$'s are disjoint or not.

    Indeed, we shall find them greedily. Suppose we have chosen some of those copies of $K_{1,3}$'s and we want to find an extra one in $N_{c_i}(v_i)\setminus T$, for a desired $i \in [t]$. In total, the current copies take at most $4t$ vertices. Thus, there are at least $2n/5 - |T| - 4t = 2n/5-5t \ge n/3$ vertices available in $N_{c_i}(v_i)\setminus T$. 
    Let $A$ be the set of those vertices. If any of them is incident to 3 edges of distinct colors with all endpoints also in $A$, then we are done. 
    Moreover, by Theorem~\ref{thm:GyarfasTree}, applied to the coloring induced by $A$, at least one color class spans $A$. 
    Thus, if we cannot find the desired extra copy of rainbow $K_{1,3}$, then the remaining vertices of $A$ induce a coloring in $F(m,k)$ with $m\ge n/3$, a contradiction.

    Now, we prove the upper bound for $w(\phi, k)$. Consider a new vertex $u$ and let us count in how many ways we may color the edges from $u$ to $K_n$. 

    First assume that for some $i\in [t]$, the edge $u v_i$ receives a color different from $c_i$, say $c_{u, i}$. Then consider the edges in the rainbow $K_{1,3}$'s contained in $N_{c_i}(v_i)$ that have a color different from $c_i$ and $c_{u, i}$. Each $K_{1,3}$ has at least one such edge, therefore, we can select a matching $M$ of size $t$ formed by those edges. Let us denote it by $M = \{a_1 b_1, \dots, a_t b_t\}$. 
    By considering the quadruple $u, v_i, a_j, b_j$, for $j\in [t]$, it is easy to see that the colors of $u a_j$ and $u b_j$ must be equal and be either $c_i$ or $c_{u, i}$. Thus, there are at most $2^t$ ways to color the $2t$ edges $u a_j$ and $u b_j$, where $j\in [t]$. By Lemma~\ref{lemma:uniquemonochromatic_extensions}, we can color the remaining edges from $u$ to $K_n$ in less than $(k-1)2^{n-2t-1}$ ways. Summing over all $i\in [t]$ and the choice of the color of $uv_i$, we obtain at most 
    \[
      t \cdot (k-1)\cdot 2^t \cdot (k-1)2^{n-2t-1} = t(k-1)^2\, 2^{n-t-1}
    \] extensions.

    It remains to consider the case when for all $i\in [t]$, the edge $u v_i$ receives color $c_i$. In this case, by Lemma~\ref{lemma:uniquemonochromatic_extensions}, we can color the other edges from $u$ to $K_n$ in less than $(k-1)2^{n-t}$ ways. 

    In total, we have 
$w(\phi, k) \le t(k-1)^2 2^{n-t-1} + (k-1)2^{n-t} \le {t k^2 2^{n-t}}$.
    \end{proof}

\medskip
Now we are ready to prove Theorem~\ref{thm:main_result}. The idea of the proof is that any Gallai coloring of $K_n$ can be treated as an extension of a coloring in $F(a,k)$ for some maximum $a$.
Furthermore, we need to consider the largest 2-colored clique of the coloring on such $a$ vertices. 

\subsection{Proof of Theorem~\ref{thm:main_result}}\label{sec:proof-maintheorem}

For each coloring $\phi \in \Phi_{n\to k}$, let $A(\phi)$ be a set of vertices of \emph{maximum} size such that the restriction of $\phi$ to $K_n[A(\phi)]$ forms a coloring in $F(|A(\phi)|, k)$ (in case there is more than one choice for $A(\phi)$, select one arbitrarily). 
Furthermore, let $A'(\phi)\subseteq A(\phi)$ be a set of vertices of \emph{maximum} size such that $K_n[A'(\phi)]$ is 2-colored. Let $a: = a(\phi) = |A(\phi)|$ and $m: = m(\phi)= |A'(\phi)|$. 

We say that a $2$-coloring of $E(K_m)$ is nearly \emph{nearly monochromatic} if one of the colors is used at most $m^2/20$ times. Consider the following sets of colorings.
  \begin{align*}
    M_1 &= \left\{\phi \in \Phi_{n\to k} \colon a(\phi) <  \frac{n}{6}\right\}, \\
    M_2 &= \left\{\phi \in \Phi_{n\to k} \colon n> m(\phi) \geq \frac{n}{7} \text{ and $A'(\phi)$ is not nearly monochromatic}\right\}, \\
    M_3 &= \left\{\phi \in \Phi_{n\to k} \colon n> m(\phi) \geq \frac{n}{7} \text{ and $A'(\phi)$ is nearly monochromatic}\right\} \text{ and}  \\
        M_4 &= \left\{\phi \in \Phi_{n\to k} \colon a(\phi) \geq \frac{n}{6} \text{ and } m\le n/7\right\}.
  \end{align*}

Note that $\Phi_{n\to k}\setminus (M_1 \cup M_2 \cup M_3\cup M_4)$ is the set of the $k$-colorings of $E(K_n)$ that use at most two of the colors (that is, $m=n$).
Thus, we have $|\Phi_{n\to k}\setminus (M_1 \cup M_2\cup M_3\cup M_4)|  < \binom{k}{2}2^{\binom{n}{2}}$.
To prove Theorem~\ref{thm:main_result}, it remains to show that $|M_i|\le 2^{\binom n2} o_n(1)$ for each $i\in [4]$. The main idea to bound $|M_1|$ (resp. $|M_2|$) is that although there are many options for how we choose and color $A(\phi)$ (resp. $A'(\phi)$), those coloring can be extended to a coloring of $K_n$ in few ways. On the other hand, to bound $|M_3|$ (resp. $|M_4|$) there are so few ways to color set $A'(\phi)$ (resp. $A(\phi)$) that even if we use the general bound on Lemma~\ref{lemma:uniquemonochromatic_extensions} to count the extensions of theses colorings, we get few colorings in $M_3$ (resp. $M_4$).
%

\textbf{Upper bound for $|M_1|$.} Fix an arbitrary ordering, say $(v_1, \ldots, v_n)$, of the vertices of $K_n$ and, for each $j \in [n]$, let $K_j$ be the graph induced by $\{v_1, \ldots, v_j\}$.

Let $s = \lceil n/2 \rceil$. We start with a Gallai $k$-coloring of $K_s$ and count in how many ways it can be extended to a coloring of $E(K_n)$ that belongs to $M_1$. 
By Corollary~\ref{cor:cnk}, there are less than $k^{s}2^{\binom{s}{2}}$ such colorings of $E(K_s)$.
Now, for each $j$ from $s$ to $n-1$, since we want to count colorings in $M_1$, keep only those colorings of $E(K_j)$ that do not have a set of $n/6$ vertices which induces a coloring in $F(n/6, k)$. As $n/6 \le j/3$, they also do not have $j/3$ vertices that induce a coloring in $F(j/3, k)$. By Lemma~\ref{lemma:nobig2colored} with $j$ (in place of $n$ in Lemma~\ref{lemma:nobig2colored}) and $t=n/150\le j/75$, we conclude that there are at most $tk^2 2^{j-t}$ extensions from $K_j$ to $K_{j+1}$. Using that $2^{\binom{s}{2}}\prod_{j=s}^{n-1}2^j = 2^{\binom{n}{2}}$, $k\le 2^{n/500}$ and $n$ is large enough, we have
  \begin{align*}
    |M_1| &\leq k^s2^{\binom{s}{2}}
                \bigg(\prod_{j = s}^{n-1}tk^2 2^{j-t}\bigg) \leq 2^{\binom{n}{2}} k\left(\frac{t k^3}{2^t}\right)^{\!\lfloor n/2 \rfloor} = 2^{\binom{n}{2}} k\left(\frac{n k^3}{150\cdot 2^{n/150}}\right)^{\!\lfloor n/2 \rfloor}
            = 2^{\binom{n}{2}} o_n(1).
  \end{align*}

\textbf{Upper bound for $|M_2|$.} For each $m$ with $n/7 \leq m < n$, there are $\binom{n}{m}\le n^{n-m}$ ways to choose a set $S$ of $m$ vertices as a candidate for $A'(\phi)$ and less than $\binom{k}{2}2^{\binom{m}{2}}$ ways to color the edges induced by it. Consider only those $2$-colorings of the edges in $S$ that are not nearly monochromatic, that is, both colors are used more than $m^2/20$ times.

This time the counting of extensions of such colorings to $K_n$ is done in a different way. Let $V(K_n)\setminus S = \{v_{m+1}, \ldots, v_n\}$. We have to color all edges incident to those vertices. 

For $i$ varying from $m+1$ to $n$, first we color the edges from $v_i$ to $S$ and then we color the edges from $v_i$ to $\{v_{m+1}, \ldots, v_{i-1}\}$. 
Notice that, by the maximality of $m$, the edges from $v_i$ to $S$ must use one color different from the colors on $S$. 
Therefore, by Lemma~\ref{lemma:nonextremal}, there are at most $km{2^{m-0.4m/60}} = km{2^{m- m/150}}$ ways to color those edges.
Moreover, by Lemma~\ref{lemma:uniquemonochromatic_extensions} there are at most 
$k2^{i-m-1}$ ways to color the edges from $v_i$ to $\{v_{m+1}, \ldots, v_{i-1}\}$.
Thus, as $k\le 2^{n/4300}$ and $n$ is large enough, we have
  \begin{align*}
    |M_2| &\leq \sum_{m={n}/{7}}^{n-1} n^{n-m} \binom{k}{2}2^{\binom{m}{2}}\left(\prod_{i = m}^{n-1}  km{2^{m- m/150}} k2^{i - m}\right) \\
           &\leq {k}^{2} 2^{\binom{n}{2}} \sum_{m={n}/{7}}^{n-1} n^{n-m} \left(\frac{k^2 m}{2^{m/150}}\right)^{n - m}                      \\
           &\leq {k}^{2} 2^{\binom{n}{2}} \sum_{m={n}/{7}}^{n-1} \left(\frac{n^2 k^2}{2^{n/1050}}\right)^{n-m}      \leq 2^{\binom{n}{2}} \frac{n^3 k^4}{2^{n/1050}}  = 2^{\binom{n}{2}} o_n(1).
  \end{align*}

\textbf{Upper bound for  $|M_3|$.} For $m$ with $n/7 \leq m < n$, we have $\binom{n}{m} < 2^n$ ways to choose a set $S$ of $m$ vertices.
We give an upper bound for how many colorings $\phi \in M_3$ are such that $A'(\phi) = S$.
First, there are $k(k-1)\le k^2$ ways to choose the $2$ colors for the edges in $A'(\phi)$ and to select one color to be the less frequent one of them.
 Let $c$ be the number of edges with the less frequent color in $A'(\phi)$. 
 Since $A'(\phi)$ is nearly monochromatic, we have $c \leq m^2/20$. 
For $0\le c\le m^2/20$, we have
  \[
 \binom{\binom{m}{2}}{c}\le \binom{m^2/2}{c} \le \binom{m^2/2}{m^2/20} \le (10e)^{m^2/20}.
  \]
Thus, as $m\ge n/7$ is large enough, the number of ways to color the edges induced by $S$ is at most
\[
k^2 \sum_{c=0}^{m^2/20}\binom{\binom{m}{2}}{c} \le k^2 \left(\frac{m^2}{20}+1\right) (10e)^{m^2/20} \le k^2 2^{m^2/4},
\]
because $\log (10 e)\le 4.8$.
This is already small enough that, to bound $|M_3|$, we simply use Lemma~\ref{lemma:uniquemonochromatic_extensions} for each $j$ from $m$ to $n-1$ (to count in how many ways we can extend each coloring of~$S$ to $K_n$). 
Thus, as $k\le 2^{n/200}$ and $n$ is large enough,
we have
  \begin{align*}
    |M_3| &\leq \sum_{m={n}/{7}}^{n-1} 2^n k^2 2^{m^2/4}\prod_{j = m}^{n-1}k2^j                               \\
           &\leq \sum_{m={n}/{7}}^{n-1} 2^{n + m^2/4 }k^{n}\, 2^{\binom{n}{2} - \binom{m}{2}}                     \\
           &\leq 2^{\binom{n}{2}}\sum_{m={n}/{7}}^{n-1} 2^{n + n\log k + m^2/4 - \binom{m}{2}}                    \\
           &\leq 2^{\binom{n}{2}} n 2^{n + n^2/200 + (n/7)^2/4 - \binom{{n}/{7}}{2}} = 2^{\binom{n}{2}}o_n(1).
  \end{align*}

\textbf{Upper bound for  $|M_4|$.}
Our counting this time is similar as that for $|M_3|$.
For $a$ such that $n/6\le a \le n$, we have $\binom{n}{a} < 2^n$ ways to choose a set $S$ of $a$ vertices.
Now, we bound the number of colorings $\phi \in M_4$ such that $A(\phi) = S$.
By the definition of $M_4$ (as $6/7 < 0.9$) and Lemma~\ref{lem:9}, we know that for every $\phi\in M_4$, there are $f'(a,k)\le 2^{\binom a2 - 0.05a^2 + (a+1)\log k}$ possibilities for the coloring of $A(\phi)$.
We then use Lemma~\ref{lemma:uniquemonochromatic_extensions} for each $j$ from $a$ to $n-1$ to count in how many ways we can extend each coloring of~$S$ to $K_n$. 
Thus, 
we obtain
  \begin{align*}
    |M_4| &\leq \sum_{a={n}/{6}}^{n-1} 2^n 2^{\binom a2 - 0.05a^2 + (a+1)\log k}\prod_{j = a}^{n-1}k2^j                               \\
           &\leq 2^{\binom{n}{2}} \sum_{a={n}/{6}}^{n-1} 2^{n - 0.05a^2 + (a+1)\log k}k^{n}\,  \\
           &\leq 2^{\binom{n}{2}}\sum_{a={n}/{6}}^{n-1} 2^{n - 0.05a^2 + (a+n+1)\log k} .  
  \end{align*}
As $k \le 2^{n/60}$, the function $h(a) = - 0.05a^2 + a\log k + (n+1)\log k$ is decreasing on $n/6 \le a$.
And, as $k\le 2^{n/900}$ and $n$ is large enough, we have
\[
n - 0.05a^2 + (a+n+1)\log k \le n - \frac{0.05}{36} n^2 + \left(\frac 76 n+1\right)\frac n{900}\le -n.
\]
Therefore, we have $|M_4|\le 2^{\binom{n}{2}} n 2^{-n} = 2^{\binom{n}{2}} o_n(1)$.

The proof of Theorem~\ref{thm:main_result} is completed.
  
 \medskip
\noindent\textbf{Acknowledgement.}
We thank Carlos Hoppen, Guilherme O. Mota and Maur\'icio Neto for helpful discussions.


\bibliographystyle{amsplain}
\begin{bibdiv}
\begin{biblist}

\bib{alon2004number}{article}{
      author={Alon, Noga},
      author={Balogh, J{\'o}zsef},
      author={Keevash, Peter},
      author={Sudakov, Benny},
       title={The number of edge colorings with no monochromatic cliques},
        date={2004},
     journal={Journal of the London Mathematical Society},
      volume={70},
      number={2},
       pages={273\ndash 288},
}

\bib{balogh2018typical}{article}{
      author={Balogh, J{\'o}zsef},
      author={Li, Lina},
       title={The typical structure of {G}allai colorings and their extremal
  graphs},
        date={2018},
     journal={arXiv preprint arXiv:1805.06805},
}

\bib{bastos2018counting}{article}{
      author={Bastos, Josefran de~Oliveira},
      author={Benevides, Fabricio~Siqueira},
      author={Mota, Guilherme~Oliveira},
      author={Sau, Ignasi},
       title={Counting gallai 3-colorings of complete graphs},
        date={2018},
     journal={arXiv preprint arXiv:1805.06805},
}

\bib{benevides2017edge}{article}{
      author={Benevides, Fabr{\'\i}cio~S},
      author={Hoppen, Carlos},
      author={Sampaio, Rudini~M},
       title={Edge-colorings of graphs avoiding complete graphs with a
  prescribed coloring},
        date={2017},
     journal={Discrete Mathematics},
      volume={340},
      number={9},
       pages={2143\ndash 2160},
}

\bib{cameron1986note}{article}{
      author={Cameron, K.},
      author={Edmonds, J.},
      author={Lov{\'a}sz, L.},
       title={A note on perfect graphs},
        date={1986},
     journal={Periodica Mathematica Hungarica},
      volume={17},
      number={3},
       pages={173\ndash 175},
}

\bib{chen2018gallai}{article}{
      author={Chen, Ming},
      author={Li, Yusheng},
      author={Pei, Chaoping},
       title={Gallai--{R}amsey numbers of odd cycles and complete bipartite
  graphs},
        date={2018},
     journal={Graphs and Combinatorics},
      volume={34},
      number={6},
       pages={1185\ndash 1196},
}

\bib{chua2013gallai}{article}{
      author={Chua, Lynn},
      author={Gy{\'a}rf{\'a}s, Andr{\'a}s},
      author={Hossain, Chetak},
       title={Gallai-colorings of triples and 2-factors of},
        date={2013},
     journal={International Journal of Combinatorics},
      volume={2013},
}

\bib{erdos1974some}{book}{
      author={Erd{\"o}s, Paul},
       title={Some new applications of probability methods to combinatorial
  analysis and graph theory},
   publisher={University of Calgary, Department of Mathematics, Statistics and
  Computing Science},
        date={1974},
}

\bib{fox2015erdHos}{article}{
      author={Fox, Jacob},
      author={Grinshpun, Andrey},
      author={Pach, J{\'a}nos},
       title={The {E}rd{\H{o}}s--{H}ajnal conjecture for rainbow triangles},
        date={2015},
     journal={Journal of Combinatorial Theory, Series B},
      volume={111},
       pages={75\ndash 125},
}

\bib{fujita2011gallai}{article}{
      author={Fujita, Shinya},
      author={Magnant, Colton},
       title={Gallai--{R}amsey numbers for cycles},
        date={2011},
     journal={Discrete Mathematics},
      volume={311},
      number={13},
       pages={1247\ndash 1254},
}

\bib{gallai1967transitiv}{article}{
      author={Gallai, T.},
       title={Transitiv orientierbare {G}raphen},
        date={1967},
        ISSN={0001-5954},
     journal={Acta Math. Acad. Sci. Hungar},
      volume={18},
       pages={25\ndash 66},
         url={http://dx.doi.org/10.1007/BF02020961},
      review={\MR{0221974}},
}

\bib{gallai1967-translation}{incollection}{
      author={Gallai, Tibor},
       title={A translation of {T}. {G}allai's paper: ``{T}ransitiv
  orientierbare {G}raphen'' [{A}cta {M}ath. {A}cad. {S}ci. {H}ungar. {\bf 18}
  (1967), 25--66; {MR}0221974 (36 \#5026)]},
        date={2001},
   booktitle={Perfect graphs},
      series={Wiley-Intersci. Ser. Discrete Math. Optim.},
   publisher={Wiley, Chichester},
       pages={25\ndash 66},
        note={Translated from the German and with a foreword by Fr\'ed\'eric
  Maffray and Myriam Preissmann},
      review={\MR{1861357}},
}

\bib{gyarfas2010gallai}{article}{
      author={Gy{\'a}rf{\'a}s, Andr{\'a}s},
      author={S{\'a}rk{\"o}zy, G{\'a}bor~N},
       title={Gallai colorings of non-complete graphs},
        date={2010},
     journal={Discrete Mathematics},
      volume={310},
      number={5},
       pages={977\ndash 980},
}

\bib{GySi2004edge}{article}{
      author={Gy{\'a}rf{\'a}s, Andr{\'a}s},
      author={Simonyi, G{\'a}bor},
       title={Edge colorings of complete graphs without tricolored triangles},
        date={2004},
     journal={Journal of Graph Theory},
      volume={46},
      number={3},
       pages={211\ndash 216},
}

\bib{hoppen2015rainbow}{article}{
      author={Hoppen, Carlos},
      author={Lefmann, Hanno},
      author={Odermann, Knut},
       title={A rainbow {E}rd{\H{o}}s-{R}othschild problem},
        date={2015},
     journal={Electronic Notes in Discrete Mathematics},
      volume={49},
       pages={473\ndash 480},
}

\bib{hoppen2017graphs}{article}{
      author={Hoppen, Carlos},
      author={Lefmann, Hanno},
      author={Odermann, Knut},
       title={On graphs with a large number of edge-colorings avoiding a
  rainbow triangle},
        date={2017},
     journal={European Journal of Combinatorics},
      volume={66},
       pages={168\ndash 190},
}

\bib{korner2000graph}{article}{
      author={K{\"o}rner, J{\'a}nos},
      author={Simonyi, G{\'a}bor},
       title={Graph pairs and their entropies: modularity problems},
        date={2000},
     journal={Combinatorica},
      volume={20},
      number={2},
       pages={227\ndash 240},
}

\bib{korner1992perfect}{article}{
      author={K{\"o}rner, J{\'a}nos},
      author={Simonyi, G{\'a}bor},
      author={Tuza, Zsolt},
       title={Perfect couples of graphs},
        date={1992},
     journal={Combinatorica},
      volume={12},
      number={2},
       pages={179\ndash 192},
}

\bib{pikhurko2017erdHos}{inproceedings}{
      author={Pikhurko, Oleg},
      author={Staden, Katherine},
      author={Yilma, Zelealem~B},
       title={The {E}rd{\H{o}}s--{R}othschild problem on edge-colourings with
  forbidden monochromatic cliques},
organization={Cambridge University Press},
        date={2017},
   booktitle={Mathematical proceedings of the cambridge philosophical society},
      volume={163},
       pages={341\ndash 356},
}

\bib{wu2018all}{article}{
      author={Wu, Haibo},
      author={Magnant, Colton},
      author={Nowbandegani, Pouria~Salehi},
      author={Xia, Suman},
       title={All partitions have small parts—{G}allai--{R}amsey numbers of
  bipartite graphs},
        date={2018},
     journal={Discrete Applied Mathematics},
}

\bib{zhang2018gallai}{article}{
      author={Zhang, Fangfang},
      author={Chen, Yaojun},
      author={Song, Zi-Xia},
       title={Gallai-{R}amsey numbers of cycles},
        date={2018},
     journal={arXiv preprint arXiv:1809.00227},
}

\end{biblist}
\end{bibdiv}

\end{document}